\newtheorem{thm}{Theorem}
\newtheorem{lem}{Lemma}
\title{Are fractional Brownian motions predictable?}
\author{Adam Jakubowski
\\Nicolaus Copernicus University\\
adjakubo@mat.uni.torun.pl}
\begin{document}
\maketitle

\begin{abstract}
We provide a device, called the local predictor, which extends the idea
of the predictable compensator. It is shown that a fBm with the Hurst index
greater than $1/2$ coincides
with its local predictor while fBm with the Hurst index smaller than $1/2$
does not admit any local predictor.
\end{abstract}

\section{Intoduction}

The question in the title is provocative, of course. Everybody
familiar with the theory of stochastic processes knows that a
continuous adapted process on the stochastic basis
$(\Omega,\mathcal{F}, \{\mathcal{F}_t\}, P)$ is predictable, in the
sense it is measurable with respect to the $\sigma$-algebra of {\em
predictable subsets} of $\Omega\times\mathds{R}^+$. And fractional
Brownian motions {\em are continuous}.

The point is that the predictability has a  clear meaning in the
discrete time, while in continuous time it looses its intuitive
character. Brownian motion serves in many models  as a source of
unpredictable behavior, but it is predictable in the sense of the
general theory of processes.

We are not going to suggest any change in the established
terminology, although the old alternative of ``well-measurable"
sounds more reasonable. Our aim is to provide a device for verifying
whether some fractional Brownian Motions are ``more predictable"
than others.

\section{The local predictor and its existence for fBms}

We develop the idea of a predictable compensator in somewhat
unusual direction. Let, as before,  $(\Omega,{\mathcal
F},\{{\mathcal F}_t\}_{t\in[0,T]},P)$ be a stochastic basis,
satisfying the ``usual" conditions, i.e. the filtration $\{{\mathcal
F}_t\}$ is right-continuous and ${\mathcal F}_0$ contains all
$P$-null sets of ${\mathcal F}_T$. By convention, we set ${\mathcal
F}_{\infty} = {\mathcal F}$.

Let $\{X_t\}_{t\in [0,T]}$ be a stochastic process on
$(\Omega,{\mathcal F},P)$, adapted to $\{{\mathcal
F}_t\}_{t\in[0,T]}$ (i.e. for each $t\in [0,T]$, $X_t$ is ${\mathcal
F}_t$ measurable) and with c\`adl\`ag (or regular) trajectories
(i.e. its $P$-almost all trajectories are right-continuous and
possess limits from the left on $(0,T]$).

Suppose we are sampling the process $\{X_t\}$ at points $0=t_0^{\theta} < t_1^{\theta}
< t_2^{\theta} < \ldots < t_{k^{\theta}}^{\theta} = T\}$ of a partition $\theta$ of the
interval $[0,T]$. By the discretization of $X$ on $\theta$ we
mean the process

\[ X^{\theta}(t) = X_{t_k^{\theta}}\textrm{\quad if\quad }
t_k^{\theta} \leq t < t_{k+1}^{\theta},\
X^{\theta}_T = X_T.\]

If random variables $\{X_t\}_{t\in[0,T]}$ are integrable,  we can
associate with any discretization $X^{\theta}$ its ``predictable
compensator"
\begin{eqnarray*}
 A^{\theta}_t &=& 0\textrm{\quad if\quad } 0 \leq t < t^{\theta}_1,\\
 A^{\theta}_t &=& \sum_{j=1}^k E\big(X_{t^{\theta}_j} - X_{t^{\theta}_{j-1}}
 \big|{\mathcal F}_{t^{\theta}_{j-1}}\big)
\textrm{\quad if\quad }t^{\theta}_k \leq t < t^{\theta}_{k+1},\
k=1,2, \ldots,
k^{\theta} - 1,\\
A^{\theta}_T &=& \sum_{j=1}^{k^{\theta}} E\big(X_{t^{\theta}_j} -
X_{t^{\theta}_{j-1}}\big|{\mathcal F}_{t^{\theta}_{j-1}}\big)\,.
\end{eqnarray*}
Notice that $A^{\theta}_t$ is ${\mathcal
F}_{t^{\theta}_{k-1}}$-measurable for $t^{\theta}_k \leq t <
t^{\theta}_{k+1}$, and so the processes $A^{\theta}$ are predictable
in a very intuitive manner, both in the discrete and in the
continuous case. It is also clear, that the discrete-time process
$\{M^{\theta}_t\}_{t\in \theta}$ given by
\[ M^{\theta}_t = X^{\theta}_t - A^{\theta}_t,\quad t\in\theta,\]
is a martingale with respect to the discrete filtration $\{{\mathcal
F}_t\}_{t\in \theta}$.

If we have square integrability of $\{X_t\}_{t\in [0,T]}$, then the predictable
compensator $\{A^{\theta}_t\}_{t\in\theta}$ possesses also a clear variational interpretation.
Fix $\theta$ and let $\mathcal{A}^{\theta}$ be the set of discrete-time stochastic processes
$\{A_t\}_{t\in\theta}$ which are $\{\mathcal{F}_t\}_{t\in\theta}$-predictable, i.e. for each
$t = t^{\theta}_k\in\theta$, $A_{t^{\theta}_k}$ is $\mathcal{F}_{t^{\theta}_{k-1}}$-measurable.
Then the predictable compensator $\{A^{\theta}_t\}_{t\in\theta}$ minimizes the functional
\[ \mathcal{A}^{\theta} \ni A \mapsto E [ X - A]_T,\]
where the discrete quadratic variation $[\cdot]$ is defined as usual by
\[ [Y]_T = \sum_{t\in\theta} (\Delta Y_t)^2 = \sum_{k=1}^{k^{\theta}} (Y_{t^{\theta}_k} -
Y_{t^{\theta}_{k-1}})^2.\]

Now consider a sequence $\Theta =\{\theta_n\}$ of normally
condensing partitions of $[0,T]$. This means we assume  $\theta_n
\subset \theta_{n+1}$ and the mesh
\[|\theta_n| = \max_{1\leq k \leq k^{\theta_n}} t^{\theta_n}_k - t^{\theta_n}_{k-1} \to 0,\quad
\textrm{as\ } n\to\infty.\]
We will say that an adapted stochastic
process $\{X_t\}_{t\in[0,T]}$ with regular trajectories admits {\em
a local predictor} $\{C_t\}_{t\in[0,T]}$  {\em along} $\Theta =
\{\theta_n\}$ and in the sense of convergence $\to_{\tau}$ if
\[ A^{\theta_n} \to_{\tau} C\]
and $C$ has regular trajectories.

As an example we will examine the existence of a local predictor for fractional
Brownian motions.

Let us recall that a fractional Brownian motion (fBm) $\{B^H_t\}_{t\in\mathds{R}^+}$ of Hurst
index $H\in(0,1)$ is a continuous and centered Gaussian process with covariance function
\[ E ( B^H_t B^H_s) = \frac{1}{2} (t^{2H} + s^{2H} - |t - s|^{2H}).\]
For extensive theory of stochastic analysis based on fBms we refer to the most recent monographs
\cite{BHOZ} and \cite{Mish}.
\begin{thm}\label{ajth1}
For $H \in (1/2,1)$ the fractional Brownian motion $\{B^H_t\}_{t\in[0,T]}$ coincides
with its local predictor along any
sequence of normally condensing partitions and
in the sense of {\em the uniform convergence in probability}.
\end{thm}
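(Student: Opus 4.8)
The plan is to analyze the martingale part $M^{\theta_n} = X^{\theta_n} - A^{\theta_n}$ and to show that it becomes negligible, so that the compensators $A^{\theta_n}$ inherit the limit of the discretizations $X^{\theta_n}$, which is $B^H$ itself. Since $B^H$ has ($P$-a.s.) continuous, hence uniformly continuous, trajectories on the compact interval $[0,T]$, and since $|\theta_n|\to 0$, one gets $\sup_{t\in[0,T]}|X^{\theta_n}_t - B^H_t|\to 0$ almost surely, which disposes of the discretization (nesting is not even needed here). It therefore suffices to prove that $\sup_{t\in[0,T]}|M^{\theta_n}_t|\to 0$ in probability, for then $A^{\theta_n} = X^{\theta_n} - M^{\theta_n}$ converges to $B^H$ uniformly in probability, and $B^H$, being continuous, has regular trajectories.

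I would next recall from the construction that $\{M^{\theta_n}_{t}\}_{t\in\theta_n}$ is a martingale for the discrete filtration $\{\mathcal{F}_t\}_{t\in\theta_n}$, and that both $X^{\theta_n}$ and $A^{\theta_n}$ are constant on each half-open block $[t^{\theta_n}_k, t^{\theta_n}_{k+1})$; hence $\sup_{t\in[0,T]}|M^{\theta_n}_t| = \max_{0\le k\le k^{\theta_n}}|M^{\theta_n}_{t^{\theta_n}_k}|$. By Doob's $L^2$ maximal inequality for this discrete martingale, $E\big[\max_k (M^{\theta_n}_{t^{\theta_n}_k})^2\big] \le 4\, E\big[(M^{\theta_n}_T)^2\big]$, so the whole matter reduces to showing $E[(M^{\theta_n}_T)^2]\to 0$, which yields uniform convergence in $L^2$ and a fortiori in probability.

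For the terminal second moment I would write $\Delta_j = B^H_{t^{\theta_n}_j} - B^H_{t^{\theta_n}_{j-1}}$ and use that the martingale increments $\Delta_j - E(\Delta_j\,|\,\mathcal{F}_{t^{\theta_n}_{j-1}})$ are orthogonal in $L^2$, whence
\[ E\big[(M^{\theta_n}_T)^2\big] = \sum_{j=1}^{k^{\theta_n}} E\big[\operatorname{Var}(\Delta_j\,|\,\mathcal{F}_{t^{\theta_n}_{j-1}})\big]. \]
The essential and only nontrivial input is that conditioning cannot increase variance: by the law of total variance each summand is bounded by the unconditional variance, $E[\operatorname{Var}(\Delta_j\,|\,\mathcal{F}_{t^{\theta_n}_{j-1}})]\le \operatorname{Var}(\Delta_j) = (t^{\theta_n}_j - t^{\theta_n}_{j-1})^{2H}$. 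Consequently, using $2H-1>0$,
\[ E\big[(M^{\theta_n}_T)^2\big] \le \sum_{j=1}^{k^{\theta_n}} (t^{\theta_n}_j - t^{\theta_n}_{j-1})^{2H} \le |\theta_n|^{2H-1}\sum_{j=1}^{k^{\theta_n}} (t^{\theta_n}_j - t^{\theta_n}_{j-1}) = |\theta_n|^{2H-1}\,T \longrightarrow 0. \]

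The argument is robust: the martingale-plus-Doob reduction holds for any square-integrable adapted process, and the only place where the fBm and the restriction $H>1/2$ enter is the final summability of $(t_j-t_{j-1})^{2H}$. The main obstacle is thus conceptual rather than computational, namely recognizing that the crude bound ``conditional variance $\le$ unconditional variance'' already suffices, so that no delicate Gaussian prediction formula for fBm is required. I would also remark that for $H=1/2$ the same sum equals $T$ (the martingale part does not vanish, consistent with Brownian motion being its own martingale), whereas for $H<1/2$ the sum $\sum(t_j-t_{j-1})^{2H}$ blows up as $|\theta_n|\to0$, which is exactly the obstruction foreshadowing the non-existence of a local predictor in that regime.
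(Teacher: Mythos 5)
Your proof is correct and takes essentially the same route as the paper's: Doob's maximal inequality reduces everything to $E\big[(M^{\theta_n}_T)^2\big]$, which you control via ``conditional variance $\le$ unconditional variance'' and the paper controls via the equivalent variational bound $E[(B^H)^{\theta_n}-A^{\theta_n}]_T \le E[(B^H)^{\theta_n}]_T$, both yielding $\sum_j (t^{\theta_n}_j-t^{\theta_n}_{j-1})^{2H}\le T|\theta_n|^{2H-1}\to 0$. The concluding appeal to a.s.\ uniform convergence of the discretizations is likewise identical.
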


\begin{proof}
We consider the natural filtration $\{\mathcal{F}_t\}_{t\in[0,T]}$ generated by the
fBm $\{B^H_t\}$. Let $\{\theta_n\}$ be a sequence of normally condensing
partitions of $[0,T]$ and let $\{A^{\theta_n}_t\}_{t\in\theta_n}$ be the predictable
compensator for the discretization of $\{(B^H)^{\theta_n}_t\}$ on $\theta_n$.
By the Doob inequality
\begin{eqnarray*}
E \sup_{t\in\theta_n} ((B^H)^{\theta_n}_t - A^{\theta_n}_t)^2 &\leq& 4 E (B^H_T - A^{\theta_n}_T)^2
= 4 E [(B^H)^{\theta_n} - A^{\theta_n}]_T \\
&\leq& 4 E [(B^H)^{\theta_n}]_T
= 4 \sum_{k=1}^{k^{\theta_n}} |t^{\theta_n}_k - t^{\theta_n}_{k-1}|^{2H}\\
&\leq&
4 T |\theta_n|^{2H-1} \to 0.
\end{eqnarray*}
Since we have also almost surely
\[ \sup_{t\in [0,T]} | (B^H)^{\theta_n}_t - B^H_t | \to 0,\]
the theorem follows.
\end{proof}
The above result is a direct consequence of the fact that for $H\in
(1/2.1)$ the fBm is a process of {\em energy zero in the sense of
Fukushima} \cite{Fu80}, i.e.
\[ E [X^{\theta_n}]_T = E \sum_{k=1}^{k^{\theta_n}} (X_{t^{\theta_n}_k} -
X_{t^{\theta_n}_{k-1}})^2 \to 0, \quad \textrm{as\ } n\to\infty.\]

Hence we have also

\begin{thm}\label{ajth2}
If $\{X_t\}$ is continuous adapted and of energy zero in the sense
of Fukushima, then it coincides with its local predictor along any
sequence of condensing partitions and in the sense of {\em the
uniform convergence in probability}.
\end{thm}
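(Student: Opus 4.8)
The plan is to follow exactly the same chain of inequalities as in the proof of Theorem \ref{ajth1}, replacing the specific computation of the discrete quadratic variation of the fBm with the abstract hypothesis of zero energy. Let $\{\theta_n\}$ be a sequence of normally condensing partitions of $[0,T]$ and let $\{A^{\theta_n}_t\}_{t\in\theta_n}$ be the predictable compensator of the discretization $X^{\theta_n}$. Since $X$ is of energy zero, it is in particular square integrable, so the compensator is well-defined and $M^{\theta_n}_t = X^{\theta_n}_t - A^{\theta_n}_t$ is a discrete-time martingale.

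First I would invoke the discrete Doob maximal inequality to bound the supremum of the squared martingale by its terminal value,
\[
E \sup_{t\in\theta_n} \big(X^{\theta_n}_t - A^{\theta_n}_t\big)^2 \leq 4\, E\big(X^{\theta_n}_T - A^{\theta_n}_T\big)^2 .
\]
Next I would identify the right-hand side with the expected discrete quadratic variation of the compensated process: since $M^{\theta_n}$ is a discrete martingale with $M^{\theta_n}_0 = X_0$ (the compensator vanishes before the first partition point), orthogonality of martingale increments gives $E(M^{\theta_n}_T - X_0)^2 = E[M^{\theta_n}]_T = E[X^{\theta_n} - A^{\theta_n}]_T$. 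By the variational (minimization) property of the predictable compensator recalled in the excerpt, the compensator minimizes $A \mapsto E[X - A]_T$ over predictable $A$; taking $A \equiv 0$ as a competitor yields
\[
E\big[X^{\theta_n} - A^{\theta_n}\big]_T \leq E\big[X^{\theta_n}\big]_T .
\]
By the zero-energy hypothesis the last expression tends to $0$ as $n\to\infty$, so $A^{\theta_n} \to X$ in $L^2$ uniformly in the partition-time sense.

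Finally, since $X$ is continuous (hence, having regular trajectories, uniformly approximable by its discretizations), we have almost surely $\sup_{t\in[0,T]} |X^{\theta_n}_t - X_t| \to 0$ as the mesh shrinks. Combining this pathwise convergence with the $L^2$ convergence of $X^{\theta_n} - A^{\theta_n}$ to $0$ gives the uniform convergence in probability of $A^{\theta_n}$ to $X$, so $X$ coincides with its local predictor. The main obstacle I anticipate is purely bookkeeping rather than conceptual: one must be careful that the suprema in the Doob step and in the discretization step are both taken over the same grid and then correctly passed to the continuous supremum over $[0,T]$, using that $X^{\theta_n} - A^{\theta_n}$ is constant on each subinterval so its running maximum over $[0,T]$ equals the maximum over the grid points.
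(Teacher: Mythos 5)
Your proposal is correct and is essentially the paper's own argument: the paper derives Theorem \ref{ajth2} by observing that the proof of Theorem \ref{ajth1} uses only the zero-energy property, via the same chain (Doob's inequality, $E[X^{\theta_n}-A^{\theta_n}]_T \leq E[X^{\theta_n}]_T$ from the minimization property, and the almost sure uniform convergence of the discretizations). The only point worth noting is the harmless bookkeeping issue you already flag, namely centering at $X_0$ (for fBm $X_0=0$, so it is invisible in the paper).
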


It may be instructive to write down the assertion of Theorems \ref{ajth1} and \ref{ajth2}.
\begin{equation}\label{ajeq1}
\sup_{t\in [0,T]} | X_t - A^{\theta_n}_t | \to_P 0.
\end{equation}

Jacod in \cite[p. 94]{Jac84}, in the context of so-called processes admitting a tangent
process with independent increments, introduced a class $B(\{\theta_n\})$ of continuous
bounded predictable processes satisfying \eqref{ajeq1} and
\begin{equation}\label{ajeq2}
\sum_{\{k\,:\,t^{\theta_n}_{k+1} \leq\, t\}} E
((X_{t^{\theta_n}_{k+1}} - X_{t^{\theta_n}_{k}})^2|
\mathcal{F}_{t^{\theta_n}_{k}}) - (E (X_{t^{\theta_n}_{k+1}} -
X_{t^{\theta_n}_{k}}| \mathcal{F}_{t^{\theta_n}_{k}}))^2
 \to_P 0.
\end{equation}
The class $B(\{\theta_n\})_{\text{loc}}$, containing fBms for
$H\in(1/2,1)$, was also considered in \cite{Jac84}. But fBms did not
appear in Jacod's paper.

For martingales we have a rather simple statement.
\begin{thm}\label{thmart}
The local predictor of a martingale (in particular: of a Brownian motion)
trivially exists and equals 0.
\end{thm}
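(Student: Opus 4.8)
The plan is to observe that for a martingale the predictable compensator $A^{\theta}$ is already identically zero for \emph{every} partition $\theta$, so that the limiting construction degenerates and there is nothing to pass to the limit.

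First I would recall the definition of $A^{\theta}$ and simply apply the defining property of a martingale. Since a martingale $\{X_t\}$ has, by definition, integrable coordinates, the conditional expectations $E\big(X_{t^{\theta}_j} - X_{t^{\theta}_{j-1}}\,\big|\,\mathcal{F}_{t^{\theta}_{j-1}}\big)$ are well defined and thus $A^{\theta}$ exists. The martingale property gives $E\big(X_{t^{\theta}_j}\,\big|\,\mathcal{F}_{t^{\theta}_{j-1}}\big) = X_{t^{\theta}_{j-1}}$, so each of these conditional increments vanishes. Summing over $j$ then yields $A^{\theta}_t = 0$ for all $t$ and all $\theta$, without any hypothesis on the mesh.

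Consequently, for an arbitrary sequence $\Theta = \{\theta_n\}$ of normally condensing partitions we have $A^{\theta_n} \equiv 0$, so the convergence $A^{\theta_n} \to_{\tau} 0$ holds trivially, and in fact simultaneously in every conceivable mode $\to_{\tau}$, while the candidate limit $C \equiv 0$ plainly has (constant, hence) regular trajectories. Thus $0$ is a local predictor of $X$ along any $\Theta$. Since a Brownian motion is a martingale with respect to its natural filtration, the parenthetical claim is the special case $X = $ Brownian motion.

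I expect no genuine obstacle here, which is precisely why the statement is labelled ``trivial.'' The only two points worth making explicit are that the martingale hypothesis itself supplies the integrability needed to form $A^{\theta}$, and that the identically-zero compensators make the choice of convergence $\to_{\tau}$ irrelevant. One might add a remark sharpening the contrast with the fBm situation: for a martingale the discretized compensators are \emph{exactly} zero, not merely asymptotically negligible as in the energy-zero case of Theorems \ref{ajth1} and \ref{ajth2}.
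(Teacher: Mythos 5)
Your proposal is correct and is exactly the argument the paper intends (the paper states this theorem without proof precisely because the compensator $A^{\theta}$ vanishes identically term by term via the martingale property). Nothing further is needed.
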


It is interesting that for $H \in (0,1/2)$ the compensators of discretizations of  fBms explode.
\begin{thm}\label{thdwa}
For $H \in (0,1/2)$ the fractional Brownian motion $\{B^H_t\}_{t\in[0,T]}$ admits no
local predictor. In fact, for any sequence $\{\theta_n\}$ of normal condensing partitions
we have
\[ \sup_n E (A^{\theta_n}_T)^2 = +\infty.\]
\end{thm}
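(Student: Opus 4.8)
The plan is to prove the quantitative assertion $E(A^{\theta_n}_T)^2\to\infty$ and then deduce non-existence of a local predictor from Gaussianity. Fix $\theta_n$, abbreviate $t_j=t_j^{\theta_n}$, and set $\Delta_j=B^H_{t_j}-B^H_{t_{j-1}}$, $a_j=E(\Delta_j\mid\mathcal F_{t_{j-1}})$, $m_j=\Delta_j-a_j$. As in the construction, this gives the orthogonal splitting $B^H_T=A^{\theta_n}_T+M^{\theta_n}_T$ with $A^{\theta_n}_T=\sum_j a_j$ and $M^{\theta_n}_T=\sum_j m_j$ a sum of orthogonal martingale increments. Since $m_j=B^H_{t_j}-E(B^H_{t_j}\mid\mathcal F_{t_{j-1}})$ and $B^H$ is Gaussian, the conditional variance is deterministic, so $\|M^{\theta_n}_T\|_2^2=\sum_j E(m_j^2)=\sum_j\mathrm{Var}(B^H_{t_j}\mid\mathcal F_{t_{j-1}})$. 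By the reverse triangle inequality in $L^2$, $\|A^{\theta_n}_T\|_2\ge\|M^{\theta_n}_T\|_2-\|B^H_T\|_2=\|M^{\theta_n}_T\|_2-T^{H}$, so the whole statement reduces to showing $\|M^{\theta_n}_T\|_2\to\infty$, i.e. that the summed prediction-error variances blow up.

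The heart of the argument, and the step I expect to be the main obstacle, is a uniform lower bound on each $\mathrm{Var}(B^H_{t_j}\mid\mathcal F_{t_{j-1}})$. For this I would invoke the \emph{strong local nondeterminism} of fractional Brownian motion: there is a constant $c_H>0$, depending only on $H$ and $T$, such that for all $t$ and all $s_1,\dots,s_m\in[0,T]$,
\[
\mathrm{Var}\bigl(B^H_t\mid B^H_{s_1},\dots,B^H_{s_m}\bigr)\ge c_H\,\min_{1\le k\le m}|t-s_k|^{2H}.
\]
Letting the points $s_k$ range over an increasingly dense subset of $[0,t_{j-1}]$ and taking $t=t_j$, every conditioning point lies at distance at least $\delta_j:=t_j-t_{j-1}$ from $t_j$, so each finite conditional variance is bounded below by $c_H\delta_j^{2H}$; passing to the limit (the continuum conditioning being the infimum of the finite ones) yields $\mathrm{Var}(B^H_{t_j}\mid\mathcal F_{t_{j-1}})\ge c_H\delta_j^{2H}$. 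Should one prefer to avoid citing local nondeterminism as a black box, self-similarity reduces this bound to the single estimate $\liminf_{r\downarrow1}\mathrm{Var}(B^H_r\mid\mathcal F_1)/(r-1)^{2H}>0$, which is exactly the local prediction-error behaviour and is the genuinely fBm-specific input; everything else is soft.

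Combining these bounds, $\|M^{\theta_n}_T\|_2^2\ge c_H\sum_j\delta_j^{2H}$, and since $2H-1<0$ and $\delta_j\le|\theta_n|$ we have $\sum_j\delta_j^{2H}\ge|\theta_n|^{2H-1}\sum_j\delta_j=T\,|\theta_n|^{2H-1}\to\infty$ as $n\to\infty$. Hence $\|M^{\theta_n}_T\|_2\to\infty$, and the reverse triangle inequality gives $E(A^{\theta_n}_T)^2=\|A^{\theta_n}_T\|_2^2\to\infty$, which in particular forces $\sup_nE(A^{\theta_n}_T)^2=+\infty$. Finally, to conclude that no local predictor exists I would use that each $A^{\theta_n}_T$, being a linear functional of the Gaussian process $B^H$, is a centered Gaussian variable; a sequence of centered Gaussians converging in probability (or in any stronger mode) necessarily has bounded and convergent variances, so the divergence $\mathrm{Var}(A^{\theta_n}_T)\to\infty$ rules out convergence of $A^{\theta_n}_T$ to any finite random variable, and a fortiori rules out uniform convergence in probability of $A^{\theta_n}$ to a process with regular trajectories.
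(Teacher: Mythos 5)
Your argument is correct, and its skeleton is the same as the paper's: the orthogonal decomposition $B^H_T = A^{\theta_n}_T + M^{\theta_n}_T$, the identity $E(M^{\theta_n}_T)^2 = \sum_j \mathrm{Var}\bigl(B^H_{t_j}\mid\mathcal{F}_{t_{j-1}}\bigr)$, a one-step lower bound of order $\delta_j^{2H}$, the divergence $\sum_j \delta_j^{2H} \geq T\,|\theta_n|^{2H-1}\to\infty$, and the Gaussian argument ruling out any probabilistic limit (which is exactly the paper's Remark~2.7). Where you genuinely diverge is in the key lemma, namely the lower bound on the one-step prediction error. The paper proves $E\bigl(B^H_t - E(B^H_t\mid\mathcal{F}_s)\bigr)^2 \geq \tfrac12|t-s|^{2H}$ from the Nuzman--Poor/Anh--Inoue prediction formula $E(B^H_t\mid\mathcal{F}_s)=\int_0^s h_{t,s}(u)B^H_u\,du$ with $h_{t,s}\geq 0$ of total mass $1$ (a representation specific to $H<1/2$), via an explicit covariance computation in which the signs of $h_{t,s}$ and of $(t-u)^{2H}-(s-u)^{2H}$ do all the work; this yields the clean constant $\tfrac12$. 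You instead invoke strong local nondeterminism of fBm (Pitt's theorem), valid for all $H\in(0,1)$ with an unspecified constant $c_H$, and your passage from finitely many conditioning points to the full $\sigma$-field $\mathcal{F}_{t_{j-1}}$ by $L^2$-martingale convergence is sound. Both routes lean on a nontrivial external input of comparable weight; yours is more robust and makes transparent that only the non-summability of $\delta_j^{2H}$ matters, while the paper's is sharper (explicit constant) and stays inside prediction theory for $H<1/2$. One caution: your parenthetical alternative via self-similarity, reducing everything to $\liminf_{r\downarrow 1}\mathrm{Var}(B^H_r\mid\mathcal{F}_1)/(r-1)^{2H}>0$, is looser than you suggest --- the scaling degenerates on the first partition interval (where $t_{j-1}=0$) and one needs the bound uniformly for $r$ near $1$, not merely a liminf --- but since it is offered only as an aside, the main proof is unaffected.
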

\begin{proof}
It suffices to show that
\begin{equation}\label{ajeq3}
\sup_n E (B^H_T - A^{\theta_n}_T)^2 = \sup_n E [(B^H)^{\theta_n} - A^{\theta_n}]_T = +\infty.
\end{equation}
For that we need a lemma, which is basically a result of Nuzman and Poor
\cite[Theorem 4.4]{NuPo}, with
corrections due to Anh and Inoue \cite[Theorem 1]{AnIn}.
\begin{lem}\label{ajL1}
If $H\in(0,1/2)$ then for $0\leq s < t$ there exists a nonnegative function
$h_{t,s}(u)$ such that
\begin{equation}
\int_0^s h_{t,s}(u)\,du = 1,
\end{equation}
and
\begin{equation}
E (B^H_t|\mathcal{F}_s) = \int_0^s h_{t,s}(u) B^H_u du,\ \text{a.s.}
\end{equation}
Recall we work with  the natural filtration $\mathcal{F}_s =
\sigma\{B^H_u\, : \, 0\leq u \leq s\}$.
Note also that it is possible to write down the exact (and complicated) form of the
function $h_{t,s}$, but we do not need it.
\end{lem}

We need also a remarkably simple lower bound for conditional variances.
\begin{lem}\label{ajL2}
For $H\in(0,1/2)$ and $0\leq s < t$
\begin{equation}\label{ajeq4}
E (B^H_t - E(B^H_t|\mathcal{F}_s))^2 = E (B^H_t - B^H_s - E(B^H_t - B^H_s|\mathcal{F}_s))^2
\geq \frac12 |t - s|^{2H}.
\end{equation}
\end{lem}
\begin{proof} Inequality (\ref{ajeq4}) follows from the chain of equalities
\begin{eqnarray*}
\lefteqn{E (B^H_t - B^H_s - E(B^H_t - B^H_s|\mathcal{F}_s))^2 =
E (B^H_t - B^H_s)^2 - E(E(B^H_t - B^H_s|\mathcal{F}_s))^2}\\
&=& E (B^H_t - B^H_s)^2 - E((B^H_t - B^H_s)E(B^H_t - B^H_s|\mathcal{F}_s))\\
&=& E (B^H_t - B^H_s)^2 - E( B^H_t E(B^H_t|\mathcal{F}_s)) - E (B^H_s)^2\\
& & \qquad + E (B^H_s E(B^H_t|\mathcal{F}_s))
+ E B^H_t B^H_s \\
&=& (t-s)^{2H} - \frac12 \int_0^s h_{t,s}(u) (t^{2H} + u^{2H} - (t-u)^{2H})\,du - s^{2H}\\
& &+ \frac12 \int_0^s h_{t,s}(u) (s^{2H} + u^{2H} - (s-u)^{2H})\,du
+\frac12 (t^{2H} + s^{2H} - (t-s)^{2H})\\
&=& \frac12 (t-s)^{2H} + \frac12\int_0^s h_{t,s}(u) ((t-u)^{2H} - (s-u)^{2H})\,du,
\end{eqnarray*}
and from the observation that for $H \in (0,1/2)$
\[\frac12\int_0^s h_{t,s}(u) ((t-u)^{2H} - (s-u)^{2H})\,du \geq 0.\]
\end{proof}
Now we are ready to verify \eqref{ajeq3}. By \eqref{ajeq4}
\[E [(B^H)^{\theta_n} - A^{\theta_n}]_T \geq \frac12 \sum_{k=1}^{k^{\theta_n}}
|t^{\theta_n}_k - t^{\theta_n}_{k-1}|^{2H} \to +\infty,\]
for every sequence $\{\theta_n\}$ of normal condensing partitions of $[0,T]$.
\end{proof}

\noindent {\em Remark 2.7} The random variables $A^{\theta_n}_T$ are
Gaussian, so $\sup_n E (A^{\theta_n}_T)^2 = +\infty$ is equivalent
to the lack of tightness of the family $\{A^{\theta_n}_T\}$. Thus in
the case $H\in(0,1/2)$ the compensators do not stabilize in any
reasonable probabilistic sense.

\section{On the existence of local predictors}

\subsection{Submartingales}
It is not difficult to show that any continuous and nondecreasing adapted integrable
process coincides with its local predictor in the sense of the uniform convergence
in probability. This implies in turn that any submartingale of class D with continuous
increasing process in the Doob-Meyer decomposition also admits a local predictor
which coincides with its predictable continuous compensator.

This is no longer true if the compensator is discontinuous. We have then in general only
weak in $L^1$ convergence of discrete compensators. Such convergence, although satisfactory
from the analytical point of view, brings only little probabilistic understanding
to the nature of the compensation.

To overcome this difficulty, the author proposed in \cite{Jak05} an approach based on
the celebrated Koml\'os theorem \cite{Koml}. It is proved \textit{ibidem} that given
any sequence $\{\theta_n\}$ of partitions one can find a subsequence $\{n_j\}$ along which
the C\'esaro means of compensators of discretizations converge to the limiting compensator.
More precisely, if $\{n_j\}$ is the selected subsequence and we denote by $\{A^j_t\}$ the
predictable compensator of the discretization on $\theta_{n_j}$, then for each rational
$t\in [0,T]$
\begin{equation}\label{ajeqxx}
 B^N_t = \frac1N \sum_{j=1}^N A^j_t \to A_t,\quad \text{a.s.},
 \end{equation}
where $A$ is the continuous-time process in the Doob-Meyer decomposition.
In fact the above convergence can be strengthened: for each stopping time $\tau \leq T$
we have
\begin{equation}\label{ajeqx}
\limsup_{N \to +\infty} B^N_{\tau} = A_{\tau},\quad \text{a.s.}.
\end{equation}
In particular, this directly implies predictability of $\{A_t\}$.

\subsection{Processes with finite energy and weak Dirichlet processes}

Graversen and Rao \cite{GrRa85} proved the Doob-Meyer type decomposition for
a wide class of \textit{processes with finite energy}.
Examples of how such decomposition can work in the framework of
\textit{weak Dirichlet processes} (including cases of uniqueness) were provided
in several recent papers (see \cite{CJMS06}, \cite{ER1}, \cite{ER2}, \cite{GoRu06}).
Similarly as in the general theory for submartingales, in the Graversen-Rao original
paper the existence of the predictable
decomposition was obtained by the weak-$L^2$ arguments.

The author proved in \cite{Jak06} that the Koml\'os machinery works perfectly also in this
problem. For a sequence $\{\theta_n\}$ of partitions of $[0,T]$ such that random variables
 $\{A^{\theta_n}_T\}$ are \textit{uniformly integrable} one can select a subsequence such that
for each stopping time $\tau \leq T$
\[ B^N_{\tau} \to A_{\tau},\quad \text{in $L^1$}.\]
In the above we use the setting of \eqref{ajeqxx} and \eqref{ajeqx}.

In \cite{Jak06} an example of a bounded process was given, for which the terminal values
$\{A^{\theta_n}_T\}$ were not uniformly integrable. It follows from our Theorem \ref{thdwa}
that the fractional Brownian motion with the Hurst index $H\in(0,1/2)$ is another, more
natural example of such phenomenon.


\subsection*{Acknowledgment}
The author is grateful to Esko Valkeila for stimulating discussions.

\begin{thebibliography}{10}
\bibitem{AnIn} V.V. Ahn and A. Inoue, \textit{Prediction of fractional
Brownian motion with Hurst index less than $1/2$.} Bull. Austral. Math. Soc.
\textbf{70} (2004), 321--328.

\bibitem{BHOZ} F. Biagini, Y. Hu, B. {\O}xendal and T. Zhang, \textit{Stochastic
Calculus for Fractional Brownian Motion and Applications.} Springer 2008.

\bibitem{CJMS06}
F. Coquet, A. Jakubowski, J. M\'emin and L. S\l omi\'nski, \textit{Natural
decomposition of processes and weak Dirichlet processes}.
In: \textit{S\'eminaire de Probabilit\'es XXXIX}, 81--116, Lect. Notes in
Math. {\bf 1874}, Springer 2006.

\bibitem{ER1}
M. Errami and F. Russo, \textit{Covariation de convolution de martingales},
C. R. Acad. Sci. Paris. \textbf{326}, S\'erie I,p. (1998), 601--606.

\bibitem{ER2} M. Errami and F. Russo, \textit{$n$-covariation, generalized Dirichlet
processes and calculus with respect to finite cubic variation processes}.
Stochastic Process. Appl. \textbf{104} (2003), 259--299.

\bibitem{Fu80}
M. Fukushima, \textit{Dirichlet Forms and Markov Processes.}
Kodansha and North Holland 1980.

\bibitem{GoRu06}
F. Gozzi and F. Russo, \textit{Weak Dirichlet processes with a stochastic
control perspective}. Stochastic Process. Appl. \textbf{116} (2006), 1563--1583.

\bibitem{GrRa85}
S.E. Graversen and M. Rao, \textit{Quadratic variation and energy}.
Nagoya Math. J. \textbf{100} (1985), 163--180.

\bibitem{Jac84} J. Jacod, \textit{Une g\'en\'eralisation des semimartingales:
les processus admettant un processus \`a accroissements ind\'ependants tangent.}
In: \textit{S\'eminaire de Probabilit\'es XVIII.}, 91--118,  Lect. Notes in
Math., \textbf{1059}, Springer 1984.

\bibitem{Jak05}
A. Jakubowski, \textit{An almost sure approximation for the predictable
process in the Doob-Meyer decomposition theorem}. In:
\textit{S\'eminaire de Probabilit\'es XXXVIII}, 158--164, Lect. Notes in
Math. \textbf{1857}, Springer 2005.

\bibitem{Jak06}
A. Jakubowski, \textit{Towards a general Doob-Meyer decomposition theorem},
Probab. Math. Statist.  \textbf{26} (2006), 143--153.

\bibitem{Koml} J. Koml\'os, \textit{A
generalization of a problem of Steinhaus}. Acta Math. Acad. Sci.
Hungar., \textbf{18} (1967), 217-229.

\bibitem{Mish} Y.S. Mishura, \textit{Stochastic Calculus for
Fractional Brownian Motion and Related Processes.} Lect. Notes in Math.,
\textbf{1929}, Springer 2008.

\bibitem{NuPo} C.J. Nuzman and H.V. Poor, \textit{Linear estimation of
self-similar processes via Lamperti's transformation.}
J. Appl. Probab. \textbf{37} (2000), 429--452.
\end{thebibliography}
\end{document}